 \newtheorem{theorem}{Theorem}[section]
 \newtheorem{cor}[theorem]{Corollary}
 \newtheorem{prop}[theorem]{Proposition}
 \theoremstyle{definition}
 \theoremstyle{remark}
 \newtheorem{remark}[theorem]{Remark}
 \numberwithin{equation}{section}
\begin{document}

%
%
%
%
%
%
%
%
%

\title[Duality theorems of multiple zeta values with parameters]
 {Duality theorems of multiple zeta values \\  with parameters}

\author{Chan-Liang Chung}

\address{Institute of Mathematics, Academia Sinica, 6F, Astronomy-Mathematics Building, No. 1, Sec. 4, Roosevelt Road, Taipei 10617, Taiwan(R.O.C.) }

\email{andrechung@gate.sinica.edu.tw}


\author{Minking Eie}
\address{Department of Mathematics, National~Chung~Cheng University, 168 University Rd. Minhsiung, Chiayi 62145, Taiwan(R.O.C.) }
\email{minking@math.ccu.edu.tw}

\subjclass{Primary: 40B05; Secondary: 11M40,11M06,33E20.}

\keywords{multiple zeta value, duality theorem, sum formula.}

\date{Sep. 1, 2017}

\begin{abstract}
In this paper, we introduce the method of adding additional factors and a parameter to multiple zeta values and prove some generalizations of the duality theorem and several relations among multiple zeta values. In particular, we are able to evaluate some special (truncated) sums in terms of Riemann zeta values of different weights.
\end{abstract}

\maketitle

\section{Introduction}
We fix a positive integer $r$ and put
\[
T_{\bm{k}}(r)=\{\bm{k}=(k_1, k_2, \ldots, k_r)\in \mathbb{N}^r \;|\; 1\leq k_1<k_2<\cdots <k_r\}.
\]
For an $r$-tuple of positive integers $\bm{\alpha}=(\alpha_1, \alpha_2, \ldots, \alpha_r)$ with $\alpha_r\geq 2$, a multiple zeta value (or $r$-fold Euler sum) (cf. \cite{BBBL01}) of depth $r$ and weight $|\bm{\alpha}|=\alpha_1+\alpha_2+\cdots+\alpha_r$ is defined as
\[
\zeta(\alpha_1, \alpha_2, \ldots, \alpha_r)=\sum_{T_{\bm{k}}(r)}k_1^{-\alpha_1}k_2^{-\alpha_2}\cdots k_r^{-\alpha_r}.
\]
For our convenience, we let $\{1\}^k$ be $k$ repetitions of $1$. Due to Kontsevich \cite{D91}, multiple zeta values can be expressed as iterated integrals (or Drinfel'd integrals) over the simplex of dimension $|\bm{\alpha}|$ defined by $E_{|\bm{\alpha}|}:\;0<t_1<t_2<\cdots<t_{|\bm{\alpha}|}<1$. Indeed, we have
\[
\zeta(\alpha_1, \alpha_2, \ldots, \alpha_r)=\int_{E_{|\bm{\alpha}|}}\Omega_1\Omega_2 \cdots \Omega_{|\bm{\alpha}|}
\]
with
\[
   \Omega_{j}=\left\{\begin{array}{ll} dt_j/(1-t_j) & \mbox{if}\; j=1, \alpha_1+1, \alpha_1+\alpha_2+1, \ldots, \alpha_1+\alpha_2+\ldots +\alpha_{r-1}+1; \\
   dt_j/t_j &\mbox{otherwise}. \end{array} \right.
\]

Some particular multiple zeta values and sums of multiple zeta values can be further expressed as double integrals.
\begin{prop}\cite[page 120]{E09} \label{prop1.1}
For a pair of nonnegative integers $m$ and $n$, we have
\begin{equation*}
\begin{split}
\zeta(\{1\}^m, n+2)=&\frac{1}{m!n!}\int_{E_2}\left(\log\frac{1}{1-t_1}\right)^m \left(\log\frac{1}{t_2}\right)^n \frac{dt_1dt_2}{(1-t_1)t_2}\\
=&\frac{1}{m!n!}\int_{E_2}\left(\log\frac{1-t_1}{1-t_2}\right)^m \left(\log\frac{1}{t_2}\right)^n \frac{dt_1dt_2}{(1-t_1)t_2}.
\end{split}
\end{equation*}
\end{prop}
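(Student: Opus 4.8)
The plan is to deduce both identities from the iterated-integral (Kontsevich) representation recalled above, by integrating out all but two of the variables.

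\textbf{First identity.} Put $N=m+n+2$ and apply the representation to $\bm{\alpha}=(\{1\}^m,n+2)$. Since $\alpha_1=\cdots=\alpha_m=1$ and $\alpha_{m+1}=n+2$, the index set $\{1,\alpha_1+1,\dots,\alpha_1+\cdots+\alpha_m+1\}$ equals $\{1,2,\dots,m+1\}$, so $\Omega_j=dt_j/(1-t_j)$ for $j=1,\dots,m+1$ and $\Omega_j=dt_j/t_j$ for $j=m+2,\dots,m+n+2$. Hence
\[
\zeta(\{1\}^m,n+2)=\int_{0<t_1<\cdots<t_{m+n+2}<1}\frac{dt_1}{1-t_1}\cdots\frac{dt_{m+1}}{1-t_{m+1}}\cdot\frac{dt_{m+2}}{t_{m+2}}\cdots\frac{dt_{m+n+2}}{t_{m+n+2}}.
\]
I would then freeze the two boundary variables $t_{m+1}$ and $t_{m+2}$ and perform the remaining integrations using the elementary identity $\int_{0<u_1<\cdots<u_k<x}\omega(u_1)\cdots\omega(u_k)=\frac{1}{k!}\bigl(\int_0^x\omega\bigr)^k$ for a $1$-form $\omega$; all integrands are nonnegative, so the integrations may be carried out in any order. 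Integrating $t_1,\dots,t_m$ over $0<t_1<\cdots<t_m<t_{m+1}$ gives $\frac{1}{m!}\bigl(\log\tfrac{1}{1-t_{m+1}}\bigr)^m$, and integrating $t_{m+3},\dots,t_{m+n+2}$ over $t_{m+2}<t_{m+3}<\cdots<t_{m+n+2}<1$ gives $\frac{1}{n!}\bigl(\log\tfrac{1}{t_{m+2}}\bigr)^n$. Relabelling $t_{m+1},t_{m+2}$ as $t_1,t_2$ yields the first displayed formula.

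\textbf{Second identity.} It suffices to show that for each fixed $t_2\in(0,1)$ the inner $t_1$-integrals agree, namely
\[
\int_0^{t_2}\Bigl(\log\tfrac{1}{1-t_1}\Bigr)^m\frac{dt_1}{1-t_1}=\int_0^{t_2}\Bigl(\log\tfrac{1-t_1}{1-t_2}\Bigr)^m\frac{dt_1}{1-t_1}.
\]
I would substitute $a=-\log(1-t_1)$ and set $b=-\log(1-t_2)$, so $da=dt_1/(1-t_1)$, the range becomes $0<a<b$, and $\log\tfrac{1-t_1}{1-t_2}=b-a$; both sides then equal $\int_0^b a^m\,da=b^{m+1}/(m+1)$, the right-hand one after the reflection $a\mapsto b-a$. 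Multiplying by $\frac{1}{m!\,n!}\bigl(\log\tfrac{1}{t_2}\bigr)^n\frac{dt_2}{t_2}$ and integrating over $t_2\in(0,1)$ gives the equality of the two double integrals. (Alternatively, re-expanding each $m$-th power as an $m$-fold iterated integral of $du/(1-u)$ shows the two $(m+1)$-fold integrands differ only by the chamber of integration of $m+1$ interchangeable variables, so the integrals coincide.)

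The only delicate points are bookkeeping — locating the $dt/(1-t)$ forms, which relies on $\alpha_1=\cdots=\alpha_m=1$ — and justifying the ``integrate-out'' step; both are routine, and I do not anticipate a genuine obstacle.
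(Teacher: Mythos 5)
Your proof is correct. Note that the paper itself gives no proof of this proposition (it is quoted from \cite[page 120]{E09}), and your argument is essentially the standard one behind that citation: write $\zeta(\{1\}^m,n+2)$ as the $(m+n+2)$-fold Drinfel'd integral, freeze two adjacent variables, and collapse the remaining ordered integrations via $\int_{0<u_1<\cdots<u_k<x}\omega\cdots\omega=\frac{1}{k!}\bigl(\int\omega\bigr)^k$. The only stylistic difference is in the second identity: rather than proving it from the first by the one-dimensional substitution $a=-\log(1-t_1)$ and the reflection $a\mapsto b-a$ (which is fine), one can obtain it directly from the same iterated integral by fixing $t_1$ and $t_{m+2}$ instead of $t_{m+1}$ and $t_{m+2}$, since integrating $t_2,\dots,t_{m+1}$ over $t_1<t_2<\cdots<t_{m+1}<t_{m+2}$ yields $\frac{1}{m!}\bigl(\log\frac{1-t_1}{1-t_{m+2}}\bigr)^m$; this is exactly the device the paper reuses later (e.g.\ in Proposition \ref{prop1.2} and the proof of Theorem \ref{thm1}), and your parenthetical alternative amounts to the same thing. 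Your appeal to nonnegativity of the integrands (Tonelli) adequately justifies integrating in any order.
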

\begin{prop}\cite[page 120]{E09}\label{prop1.2}
For nonnegative integers $p, q, r$ and $\ell$, we have
\begin{equation*}
\begin{split}
&\sum_{|\bm{\alpha}|=q+r+1}\zeta(\{1\}^p, \alpha_1, \ldots, \alpha_r, \alpha_{r+1}+\ell+1)\\
=&\frac{1}{p!q!r!\ell!}\int_{E_2}\left(\log\frac{1}{1-t_1}\right)^p \left(\log\frac{1-t_1}{1-t_2}\right)^r \left(\log\frac{t_2}{t_1}\right)^q \left(\log\frac{1}{t_2}\right)^{\ell} \frac{dt_1dt_2}{(1-t_1)t_2}.
\end{split}
\end{equation*}
\end{prop}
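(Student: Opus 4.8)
The plan is to express each summand on the left as an iterated (Drinfel'd) integral, to sum these integrals over $\bm{\alpha}$, and to show that the total collapses — after integrating out all but two of the simplex variables — to the double integral on the right. Throughout write $a$ for the one-form $dt/(1-t)$ and $b$ for $dt/t$, and for a word $w=\omega_1\cdots\omega_N$ in $a,b$ write $\int_{E_N}w:=\int_{E_N}\omega_1\cdots\omega_N$; recall that a convergent multiple zeta value $\zeta(k_1,\dots,k_d)$ equals $\int_{E_k}$ of the word $(ab^{k_1-1})(ab^{k_2-1})\cdots(ab^{k_d-1})$, $k=k_1+\cdots+k_d$.

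The first step is to read off the word attached to a single summand $\zeta(\{1\}^p,\alpha_1,\dots,\alpha_r,\alpha_{r+1}+\ell+1)$, where $\bm{\alpha}\in\mathbb{N}^{r+1}$. Putting $c_i:=\alpha_i-1\ge 0$ (so $c_1+\cdots+c_{r+1}=q$), this word is
\[
a^{p+1}\,\bigl(b^{c_1}ab^{c_2}a\cdots ab^{c_r}ab^{c_{r+1}}\bigr)\,b^{\ell+1},
\]
of length $N:=p+q+r+\ell+2$, which is the common weight of all the summands. The key combinatorial observation is that the middle factor $b^{c_1}ab^{c_2}\cdots ab^{c_r}ab^{c_{r+1}}$ contains exactly $r$ letters $a$ and $q$ letters $b$, and that as $\bm{\alpha}$ runs over $\{|\bm{\alpha}|=q+r+1\}$ this factor runs \emph{bijectively} over all words in $a,b$ with $r$ copies of $a$ and $q$ copies of $b$ (such a word is determined by the sizes $c_1,\dots,c_{r+1}$ of its $r+1$ blocks of $b$'s, i.e.\ by a composition of $q$). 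Hence, after interchanging the finite sum with the integral,
\[
\sum_{|\bm{\alpha}|=q+r+1}\zeta(\{1\}^p,\alpha_1,\dots,\alpha_r,\alpha_{r+1}+\ell+1)=\sum_{v}\int_{E_N}a^{p+1}\,v\,b^{\ell+1},
\]
the inner sum over all words $v$ with $r$ letters $a$ and $q$ letters $b$; equivalently, the right-hand side is $\int_{E_N}$ applied to $a^{p+1}$ followed by the shuffle of $a^{r}$ with $b^{q}$ followed by $b^{\ell+1}$.

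The second step is to peel off the outer blocks and collapse the shuffle inside the integral $\int_{0<x_1<\cdots<x_N<1}$. Integrating out the first $p$ variables (each carrying $a$) over the simplex they occupy below the $(p+1)$-st variable, which I rename $t_1$, yields the factor $\tfrac1{p!}\bigl(\log\tfrac1{1-t_1}\bigr)^p$; integrating out the last $\ell$ variables (each carrying $b$) over the simplex above the variable just preceding them, renamed $t_2$, yields $\tfrac1{\ell!}\bigl(\log\tfrac1{t_2}\bigr)^{\ell}$, and $0<t_1<t_2<1$ automatically. The remaining $r+q$ variables all lie strictly between $t_1$ and $t_2$; summing the iterated integrals over all interleavings $v$ of the $r$ $a$'s and the $q$ $b$'s equals, by the standard fact that a shuffle of iterated integrals is the integral over the product simplex,
\[
\Bigl(\int_{t_1<y_1<\cdots<y_r<t_2}\prod_{i=1}^{r}\frac{dy_i}{1-y_i}\Bigr)\Bigl(\int_{t_1<z_1<\cdots<z_q<t_2}\prod_{j=1}^{q}\frac{dz_j}{z_j}\Bigr)=\frac1{r!}\Bigl(\log\frac{1-t_1}{1-t_2}\Bigr)^{r}\frac1{q!}\Bigl(\log\frac{t_2}{t_1}\Bigr)^{q}.
\]
Multiplying the four log-power factors by the two surviving forms $dt_1/(1-t_1)$ and $dt_2/t_2$ and integrating over $E_2$ gives exactly the asserted identity; taking $q=r=0$ recovers Proposition \ref{prop1.1}.

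The main obstacle is the bookkeeping in the first two steps: one must check that the $(p+1)$-st letter of every summand's word is $a$ and its last $\ell+1$ letters are $b$ (so the peeling-off is uniform across the whole sum — this is visible in the displayed word), and, more substantively, prove the lemma that summing the iterated integrals of all interleavings of $a^{r}$ and $b^{q}$ factors as the product of the two elementary log-power integrals. I would establish the latter by partitioning the product simplex $\{t_1<y_1<\cdots<y_r<t_2\}\times\{t_1<z_1<\cdots<z_q<t_2\}$ according to the total ordering of the $y$'s and $z$'s and applying Fubini. Once these are in place the remaining computations — the evaluations $\int_{t_1}^{t_2}ds/(1-s)=\log\tfrac{1-t_1}{1-t_2}$ and the like, together with the absolute convergence needed to interchange the finite sum with the integral — are routine.
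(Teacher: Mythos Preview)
Your argument is correct. The paper does not prove Proposition~\ref{prop1.2}; it is quoted from \cite[page~120]{E09} and used as a known tool, so there is no in-paper proof to compare against. Your route---writing each summand as a Drinfel'd word $a^{p+1}\,v\,b^{\ell+1}$, observing that the middle block $v$ ranges bijectively over all words with $r$ letters $a$ and $q$ letters $b$ (i.e.\ over $a^{r}\shuffle b^{q}$), then integrating out the outer $p$ and $\ell$ variables and collapsing the shuffle via Fubini into the product of two log-power factors---is exactly the standard derivation that underlies the cited reference and the paper's own use of such double-integral representations (compare the reduction of $I(p,q;a,r)$ to a double integral in the proof of Theorem~\ref{thm1}). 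One cosmetic remark: the phrase ``absolute convergence needed to interchange the finite sum with the integral'' at the end is superfluous, since the sum over $|\bm{\alpha}|=q+r+1$ is finite; the only analytic point is the convergence of the individual iterated integrals, which holds because each word begins with $a$ and ends with $b$.
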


For a pair of positive integers $p$ and $q$, the multiple zeta value $\zeta(\{1\}^{p-1}, q+1)$ can be expressed as iterated integral
\[
\int_{E_{p+q}}\prod_{j=1}^p \frac{dt_j}{1-t_j} \prod_{i=p+1}^{p+q} \frac{dt_i}{t_i}.
\]
The change of variables $u_1=1-t_{p+q}, \; u_2=1-t_{p+q-1}, \ldots, u_{p+q}=1-t_1$ then transforms the iterated integral into
\[
\int_{E_{p+q}}\prod_{j=1}^{q}\frac{du_j}{1-u_j}\prod_{i=q+1}^{p+q}\frac{du_i}{u_i}
\]
which is $\zeta(\{1\}^{q-1}, p+1)$. So that we have $\zeta(\{1\}^{p-1}, q+1)=\zeta(\{1\}^{q-1}, p+1)$.
This is called the Drinfel'd duality theorem and it can be extended to a vectorized version (cf.  \cite{EL16}). For $n$ pairs of positive integers $p_1, q_1;\; p_2, q_2; \ldots;\; p_n, q_n$, let 
\[
\bm{k}=(\{1\}^{p_1-1}, q_1+1, \{1\}^{p_2-1}, q_2+1, \ldots, \{1\}^{p_n-1}, q_n+1)
\]
and let $\bm{k}'$ be its dual defined by 
\[
\bm{k}'=(\{1\}^{q_n-1}, p_n+1, \{1\}^{q_{n-1}-1}, p_{n-1}+1, \ldots, \{1\}^{q_1-1}, p_1+1).
\]
Then the general duality theorem asserts that $\zeta(\bm{k})=\zeta(\bm{k}')$ \cite{Z94}. Ohno's generalization of the duality and sum formulas \cite{O99} asserts further: 
\begin{equation}\label{eq1.1}
\sum_{|\bm{c}|=m}\zeta(\bm{k}+\bm{c})=\sum_{|\bm{d}|=m}\zeta(\bm{k}'+\bm{d}),
\end{equation}
for any nonnegative integer $m$. In particular, for a pair of positive integers $p$ and $q$, we have
\begin{equation}\label{eq1.2}
\sum_{|\bm{\alpha}|=p+m}\zeta(\alpha_1,\ldots, \alpha_{p-1}, \alpha_p+q)=\sum_{|\bm{\beta}|=q+m}\zeta(\beta_1,\ldots, \beta_{q-1}, \beta_q+p).
\end{equation}
Note that the special case $q=1$ of (\ref{eq1.2}) then gives the sum formula of multiple zeta values.
\begin{prop}[The Sum Formula \cite{G97}]\label{prop1.3}
For integers $m>p\geq 0$, we have
\[
\sum_{|\bm{\alpha}|=m}\zeta(\alpha_1, \ldots, \alpha_{p-1}, \alpha_p+1)=\zeta(m+1).
\]
\end{prop}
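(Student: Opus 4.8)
The plan is to express the left-hand side as a double integral over $E_2$ by specializing Proposition~\ref{prop1.2}, and then to evaluate that integral by a single change of variables.

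We may assume $p\ge1$; when $p=1$ the sum on the left has the single term $\zeta(m+1)$, so the real content is the case $p\ge2$. Taking, in Proposition~\ref{prop1.2}, the parameter $q$ there to be $m-p$, the parameter $r$ there to be $p-1$, and the remaining two parameters to be $0$ (so that there are no prepended $1$'s and no trailing $\log(1/t_2)$ factor, the index has exactly $p$ entries, and the weight constraint $|\bm\alpha|=q+r+1$ reads $|\bm\alpha|=m$), and noting that $q=m-p\ge1$ is admissible because $m>p$, that proposition gives
\[
\sum_{|\bm\alpha|=m}\zeta(\alpha_1,\dots,\alpha_{p-1},\alpha_p+1)
=\frac{1}{(m-p)!\,(p-1)!}\int_{E_2}\left(\log\frac{1-t_1}{1-t_2}\right)^{p-1}\left(\log\frac{t_2}{t_1}\right)^{m-p}\frac{dt_1\,dt_2}{(1-t_1)t_2}.
\]

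Next I would substitute $a=\dfrac{1-t_2}{1-t_1}$ and $b=\dfrac{t_1}{t_2}$. A short computation shows this maps $E_2$ bijectively onto the open unit square $(0,1)^2$, with inverse $t_1=\dfrac{b(1-a)}{1-ab}$, $t_2=\dfrac{1-a}{1-ab}$; one finds $1-t_1=\dfrac{1-b}{1-ab}$ and Jacobian $\dfrac{\partial(t_1,t_2)}{\partial(a,b)}=\dfrac{(1-a)(1-b)}{(1-ab)^3}$, so the measure collapses to
\[
\frac{dt_1\,dt_2}{(1-t_1)t_2}=\frac{da\,db}{1-ab},
\]
while $\log\dfrac{1-t_1}{1-t_2}=-\log a$ and $\log\dfrac{t_2}{t_1}=-\log b$. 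Expanding $\dfrac{1}{1-ab}=\sum_{n\ge1}(ab)^{n-1}$ and integrating term by term using $\displaystyle\int_0^1 x^{n-1}(-\log x)^k\,dx=\frac{k!}{n^{k+1}}$, the right-hand side becomes
\[
\frac{1}{(m-p)!\,(p-1)!}\sum_{n\ge1}\frac{(p-1)!}{n^{p}}\cdot\frac{(m-p)!}{n^{m-p+1}}=\sum_{n\ge1}\frac{1}{n^{m+1}}=\zeta(m+1),
\]
which is the claim.

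I expect the only real work to be verifying the change of variables: that $(a,b)\mapsto(t_1,t_2)$ is genuinely a bijection $(0,1)^2\to E_2$ and that the Jacobian cancels the weight $(1-t_1)t_2$ exactly. Both are routine, and no convergence subtleties arise since every integrand is nonnegative (so the term-by-term integration is justified by monotone convergence). For completeness, I note that the identity is also the $q=1$ case of Ohno's relation~\eqref{eq1.2}, whose right-hand side then reduces to the single term $\zeta(m+1)$; but as \eqref{eq1.2} is strictly stronger, the self-contained route through Proposition~\ref{prop1.2} is the one I would carry out.
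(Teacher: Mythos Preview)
Your proof is correct. The specialization of Proposition~\ref{prop1.2} is accurate, the change of variables $a=(1-t_2)/(1-t_1)$, $b=t_1/t_2$ is a genuine diffeomorphism $E_2\to(0,1)^2$ with the Jacobian you state, and the termwise integration is justified by positivity.

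Your route, however, differs from the paper's. The paper does not give an independent proof of Proposition~\ref{prop1.3}; it simply records that the sum formula is the case $q=1$ of Ohno's identity~\eqref{eq1.2} (and cites Granville). You mention this deduction at the end but instead carry out a direct evaluation via the double-integral representation of Proposition~\ref{prop1.2} and the $(a,b)$ substitution. The advantage of your approach is that it is self-contained within the paper's toolkit: it uses only Proposition~\ref{prop1.2}, which is quoted with proof reference, rather than the full strength of Ohno's theorem, which the paper merely cites. The paper's derivation is shorter but rests on a deeper result; yours is an honest computation from first principles. A minor remark: your aside that ``the real content is the case $p\ge2$'' is unnecessary, since the integral argument you give already handles $p=1$ (with $r=0$) without modification.
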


In this paper, we add further factors to multiple zeta values with parameters and produce the following theorems.
\begin{theorem}\label{thm1}
Suppose that $p$ and $q$ are positive integers and $r$ is a nonnegative integer. Then for any real number $a>-1$ and nonnegative integer $m$, we have
\begin{equation}\label{eq1.3}
\begin{split}
&\sum_{\substack{|\bm{\alpha}|=p+m \\ T_{\bm{k}}(p)}}(k_1+a)^{-\alpha_1}(k_2+a)^{-\alpha_2}\cdots (k_p+a)^{-\alpha_p}(k_p+r)^{-q}\\
=&\frac{1}{r!}\sum_{\substack{|\bm{\beta}|=q+m \\ T_{\bm{\ell}}(q)}}\frac{\ell_1(\ell_1+1)\cdots(\ell_1+r-1)}{(\ell_1+a)^{\beta_1}(\ell_2+a)^{\beta_2}\cdots(\ell_q+a)^{\beta_q}}\sum_{j=0}^r (-1)^j \binom{r}{j}(\ell_q+j)^{-p}.
\end{split}
\end{equation}
\end{theorem}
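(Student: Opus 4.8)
The plan is to remove the Ohno-type summations by a generating-function device and then prove the resulting sum-free, parameter-dependent identity by writing each side as a double integral over the unit square and matching the two integrals by an explicit change of variables. \emph{Step 1 (reduction).} For a fixed chain $1\le k_1<\cdots<k_p$ one has, as a power series in a new variable $\xi$,
\[
\sum_{m\ge0}\xi^{m}\sum_{|\bm\alpha|=p+m}(k_1+a)^{-\alpha_1}\cdots(k_p+a)^{-\alpha_p}=\prod_{i=1}^{p}\frac{1}{k_i+a-\xi},
\]
and likewise for the $\bm\beta$-sum on the right of \eqref{eq1.3}. Multiplying \eqref{eq1.3} by $\xi^{m}$, summing over $m$, interchanging the sums (justified by absolute convergence for small real $\xi$), and setting $b=a-\xi$, it suffices to prove for every real $b>-1$ the identity $(\star)$
\[
\sum_{T_{\bm k}(p)}\frac{1}{(k_p+r)^{q}}\prod_{i=1}^{p}\frac{1}{k_i+b}
=\frac{1}{r!}\sum_{T_{\bm\ell}(q)}\frac{\ell_1(\ell_1+1)\cdots(\ell_1+r-1)}{\prod_{i=1}^{q}(\ell_i+b)}\sum_{j=0}^{r}(-1)^{j}\binom{r}{j}\frac{1}{(\ell_q+j)^{p}},
\]
since comparing coefficients of $\xi^{m}$ then recovers \eqref{eq1.3}.

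\emph{Step 2 (two double integrals).} I turn each side of $(\star)$ into a double integral. Using the Beta integrals $(k+b)^{-1}=\int_0^1 x^{k+b-1}\,dx$ and $(k_p+r)^{-q}=\frac1{(q-1)!}\int_0^1 s^{k_p+r-1}(\log\tfrac1s)^{q-1}\,ds$, the telescoping formula $\sum_{1\le k_1<\cdots<k_p}\prod_i z_i^{k_i}=\prod_{i=1}^{p}\frac{z_i\cdots z_p}{1-z_i\cdots z_p}$, the substitution $u_i=x_ix_{i+1}\cdots x_p$ (which turns $(0,1)^p$ into the simplex $0<u_1<\cdots<u_p<1$ with $\prod_i x_i^{b-1}\,dx=u_1^{b-1}(u_2\cdots u_p)^{-1}\,du$), and then integrating out $u_2,\dots,u_p$ (each contributes the same factor $1/(1-u_is)$, so the ordered integral gives $\tfrac1{(p-1)!}$ times a power of $\tfrac1s\log\tfrac{1-u_1s}{1-s}$), the left side of $(\star)$ becomes
\[
\frac{1}{(p-1)!\,(q-1)!}\int_0^1\!\!\int_0^1 s^{r}\Bigl(\log\tfrac1s\Bigr)^{q-1}\frac{u^{b}}{1-us}\Bigl(\log\tfrac{1-us}{1-s}\Bigr)^{p-1}du\,ds .
\]
On the right side the extra ingredients are $\sum_{j=0}^{r}(-1)^{j}\binom{r}{j}(\ell_q+j)^{-p}=\frac1{(p-1)!}\int_0^1 t^{\ell_q-1}(1-t)^{r}(\log\tfrac1t)^{p-1}\,dt$ (from $\sum_j(-1)^j\binom{r}{j}t^j=(1-t)^{r}$) and the expansion $\sum_{\ell\ge1}\ell(\ell+1)\cdots(\ell+r-1)z^{\ell}=r!\,z(1-z)^{-r-1}$, which absorbs the Pochhammer weight; after the same steps the right side of $(\star)$ becomes
\[
\frac{1}{(p-1)!\,(q-1)!}\int_0^1\!\!\int_0^1 (1-t)^{r}\Bigl(\log\tfrac1t\Bigr)^{p-1}\frac{v^{b}}{(1-vt)^{r+1}}\Bigl(\log\tfrac{1-vt}{1-t}\Bigr)^{q-1}dv\,dt .
\]
When $a=0$ and $r=0$, identity $(\star)$ is the Drinfel'd duality $\zeta(\{1\}^{p-1},q+1)=\zeta(\{1\}^{q-1},p+1)$, the two displays above reduce to the double integrals of Propositions \ref{prop1.1} and \ref{prop1.2}, and \eqref{eq1.3} specializes to \eqref{eq1.2}.

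\emph{Step 3 (change of variables).} In the first double integral I substitute $v=u$ and $s=\dfrac{1-t}{1-ut}$ (equivalently $t=\dfrac{1-s}{1-us}$), which for each fixed $u$ is an involution of $(0,1)$ degenerating to Drinfel'd's $t\mapsto1-t$ at $u=0$. One computes $1-us=\dfrac{1-u}{1-ut}$, $1-s=\dfrac{t(1-u)}{1-ut}$, hence $\dfrac{1-us}{1-s}=\dfrac1t$ and $\dfrac{1-vt}{1-t}=\dfrac1s$, so the two logarithmic factors get exchanged; the Jacobian is $ds=\dfrac{1-u}{(1-ut)^{2}}\,dt$, and it combines with $s^{r}/(1-us)$ to produce precisely $(1-t)^{r}/(1-vt)^{r+1}$. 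Thus the first integral is carried onto the second, which proves $(\star)$ and therefore Theorem \ref{thm1}.

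The substantive difficulty is to \emph{discover} the two double-integral normal forms — especially to foresee the factor $(1-vt)^{-r-1}$ on the dual side, which is what the weight $\ell_1(\ell_1+1)\cdots(\ell_1+r-1)$ is forced to become — and to spot the Möbius involution $s=(1-t)/(1-ut)$ that swaps $\log\frac{1-us}{1-s}$ with $\log\frac1t$ while fixing $u$. Once these are in hand the rest is a straightforward if somewhat lengthy verification that all elementary factors and the Jacobian fit together, together with routine bookkeeping for the interchanges of summation and integration (for $-1<b\le0$ one may first treat $b$ large and then continue analytically in $b$).
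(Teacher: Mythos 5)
Your proof is correct and is essentially the paper's argument in different clothing: you prove the same base identity ($m=0$ with a free parameter) via dual double-integral representations and then recover the Ohno-type sums by varying the parameter (your generating function in $\xi$ is equivalent to the paper's $m$-fold differentiation in $a$). Your unit-square integrals are just reparametrizations ($t_1=us,\ t_2=s$, resp. $u_1=vt,\ u_2=t$) of the paper's simplex double integrals, and your M\"obius involution $s=(1-t)/(1-ut)$ is exactly the composite of the paper's substitutions $x_1=\log\frac{1-t_1}{1-t_2},\ x_2=\log\frac1{t_2}$, the swap $x_1\leftrightarrow x_2$, and the inverse substitution, so no new gap or new idea is involved.
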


In particular, taking $q=1$ and $a=r$ in (\ref{eq1.3}) gives the following corollary.
\begin{cor}\label{cor1.5}
For any positive integer $p$ and nonnegative integers $m$ and $r$ with  $m+p\geq r+1$, the truncated sum
\begin{equation*}
\sum_{\substack{|\bm{\alpha}|=p+m \\ T_{\bm{k}}(p)}}(k_1+r)^{-\alpha_1}\cdots(k_{p-1}+r)^{-\alpha_{p-1}}(k_p+r)^{-\alpha_p-1}
\end{equation*}
is equal to
\begin{equation*}
\frac{1}{r!}\sum_{\ell=1}^{\infty} \frac{\ell(\ell+1)\cdots(\ell+r-1)}{(\ell+r)^{m+1}}\sum_{j=0}^r (-1)^j \binom{r}{j}(\ell+j)^{-p}.
\end{equation*}
\end{cor}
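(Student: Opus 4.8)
The plan is to read off Corollary \ref{cor1.5} as the specialization of Theorem \ref{thm1} obtained by setting $q = 1$ and $a = r$; once these two substitutions are made there is nothing left but simplification. First I would check that the hypotheses of Theorem \ref{thm1} are satisfied: the choice $a = r$ is admissible because $r$ is a nonnegative integer, so $a = r \ge 0 > -1$, while $m \ge 0$ and $p \ge 1$ as required. Hence (\ref{eq1.3}) may be applied with $q = 1$, $a = r$. On the left-hand side this turns the tail factor $(k_p + r)^{-q}$ into $(k_p+r)^{-1}$ and each $(k_i+a)^{-\alpha_i}$ into $(k_i+r)^{-\alpha_i}$; absorbing the extra $(k_p+r)^{-1}$ into $(k_p+r)^{-\alpha_p}$ yields the summand
\[
(k_1+r)^{-\alpha_1}\cdots(k_{p-1}+r)^{-\alpha_{p-1}}(k_p+r)^{-\alpha_p-1},
\]
summed over $|\bm{\alpha}| = p+m$ and $T_{\bm{k}}(p)$, which is exactly the truncated sum in the corollary.

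Next I would simplify the right-hand side. With $q = 1$ the index set $T_{\bm{\ell}}(1)$ is just $\{\ell_1 \ge 1\}$, the indices $\ell_1$ and $\ell_q$ coincide, and $|\bm{\beta}| = \beta_1 = q + m = m+1$; writing $\ell$ for $\ell_1$ and putting $a = r$ replaces $(\ell_1+a)^{\beta_1}$ by $(\ell+r)^{m+1}$, so the right side of (\ref{eq1.3}) collapses to
\[
\frac{1}{r!}\sum_{\ell=1}^{\infty}\frac{\ell(\ell+1)\cdots(\ell+r-1)}{(\ell+r)^{m+1}}\sum_{j=0}^{r}(-1)^j\binom{r}{j}(\ell+j)^{-p},
\]
which is precisely the asserted value. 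As a consistency check I would note that when $r = 0$ the empty product is $1$, $1/r! = 1$, and the inner sum reduces to $\ell^{-p}$, so the formula degenerates to $\zeta(m+p+1)$, in agreement with the Sum Formula (Proposition \ref{prop1.3}).

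The one place where a word of care is needed is convergence, and this is where the hypothesis $m + p \ge r+1$ is used: expanding the finite difference on the right produces $r+1$ series of the form $\sum_{\ell \ge 1}\ell(\ell+1)\cdots(\ell+r-1)(\ell+r)^{-(m+1)}(\ell+j)^{-p}$, and each of these converges exactly when $(m+1)+p-r > 1$, i.e. when $m+p \ge r+1$. Under that assumption every splitting and rearrangement involved in matching the two displays is carried out within absolutely convergent series, so the specialization is rigorous. Since all the analytic content already sits inside Theorem \ref{thm1}, I expect no genuine obstacle — the only work is the bookkeeping of checking that the combinatorial shapes of the two sides line up under the substitution.
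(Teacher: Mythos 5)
Your proposal is correct and coincides with the paper's own derivation: the corollary is obtained exactly by specializing Theorem \ref{thm1} to $q=1$, $a=r$ and absorbing the factor $(k_p+r)^{-1}$ into $(k_p+r)^{-\alpha_p}$. Your added remarks on convergence under $m+p\geq r+1$ and the $r=0$ consistency check with the sum formula are sound but do not change the route.
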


The outline of this paper is as follows. In Section 2, we shall introduce the method of adding some factors to multiple zeta values with parameter and give a proof of Theorem \ref{thm1} according to this way. In Section 3, based on known duality relations, we can produce new identities concerning duality. We obtain a simple relation by starting with a special truncated sum in Section 4 and give a brief summary of this paper in the final section.
 
\section{Multiple zeta values with parameter and additional factors}
For different purposes, different factors such as
\[
\left(\frac{t_1}{t_{|\bm{\alpha}|}}\right)^a, \left(\frac{1-t_{|\bm{\alpha}|}}{1-t_1}\right)^b, \left(\frac{1}{1-t_1}\right)^c, t_{|\bm{\alpha}|}^d
\]
are attached to iterated integral representations of multiple zeta values to produce new multiple zeta values when differentiating with respect to the parameters. For example, the iterated integral 
\[
\int_{E_{r+1}}\left(\frac{t_1}{t_{r+1}}\right)^a \left(\prod_{j=1}^r \frac{dt_j}{1-t_j}\right) \frac{dt_{r+1}}{t_{r+1}}
\]
obtained from the iterated integral of $\zeta(\{1\}^{r-1}, 2)$ with the additional factor $(t_1/t_{r+1})^a$, can be evaluated as
\[
\sum_{T_{\bm{k}}(r)}\frac{1}{(k_1+a)(k_2+a)\cdots (k_r+a)k_r}.
\]
In a similar manner we have for positive integers $p$ and $q$
\begin{equation}\label{eq2.1}
\begin{split}
\int_{E_{p+q}}\left(\frac{t_1}{t_{p+1}}\right)^a \prod_{j=1}^p \frac{dt_j}{1-t_j} \prod_{i=p+1}^{p+q}\frac{dt_i}{t_i}=\sum_{T_{\bm{k}}(r)}[(k_1+a)\cdots(k_p+a)]^{-1}k_p^{-q}.
\end{split}
\end{equation}

On the other hand, we add extra factors such as $t_j^{r_j}, (1-t_j)^{r_j}$ with $r_j$ integers to iterated integrals of multiple zeta values to produce sums of multiple zeta values of different weights. For examples,
\[
\int_{E_2}t_2^2\frac{dt_1dt_2}{(1-t_1)t_2}=\sum_{k=1}^{\infty}\frac{1}{k(k+2)}=\frac{3}{4}
\]
and
\[
\int_{E_3}\frac{dt_1}{(1-t_1)^2}\frac{dt_2}{t_2}\frac{dt_3}{t_3}=\sum_{k=1}^{\infty}\frac{k}{k^3}=\zeta(2).
\]

Here we consider a special multiple zeta value with a parameter $a$ and an additional factor related to the multiple zeta value $\zeta(\{1\}^{p-1}, q+1)$, namely
\begin{equation}\label{eq2.2}
\begin{split}
I(p,q;a,r)=\int_{E_{p+q}}(t_{p+1})^r \left(\frac{t_1}{t_{p+1}}\right)^a \prod_{j=1}^p \frac{dt_j}{1-t_j} \prod_{i=p+1}^{p+q} \frac{dt_i}{t_i}.
\end{split}
\end{equation}
Note that $I(p,q;a,r)$ can be evaluated easily as
\begin{equation*}
\begin{split}
\sum_{T_{\bm{k}}(p)} [(k_1+a)(k_2+a)\cdots (k_p+a)]^{-1}(k_p+r)^{-q}.
\end{split}
\end{equation*}

Now we are ready to prove our main theorem.
\begin{proof}[Proof of Theorem \ref{thm1}]
Let $I(p,q;a,r)$ be defined as (\ref{eq2.2}). Fix $t_1, t_{p+1}$ and integrate with respect to the remaining variables $t_2, \ldots, t_p$ and $t_{p+2}, \ldots, t_{p+q}$ to express the iterated integral as the double integral
\[
\frac{1}{(p-1)!(q-1)!}\int_{E_2}t_2^r \left(\frac{t_1}{t_2}\right)^a \left(\log\frac{1-t_1}{1-t_2}\right)^{p-1} \left(\log\frac{1}{t_2}\right)^{q-1} \frac{dt_1dt_2}{(1-t_1)t_2}.
\]
Under the change of variables:
\[
x_1=\log\frac{1-t_1}{1-t_2}, \;x_2=\log\frac{1}{t_2},
\]
the double integral is transformed into
\[
\frac{1}{(p-1)!(q-1)!}\int_{D_2}x_1^{p-1}x_2^{q-1}\left(e^{x_1}+e^{x_2}-e^{x_1+x_2}\right)^a e^{-rx_2}dx_1dx_2,
\]
where
\[
D_2=\{(x_1, x_2) \;|\; x_1>0, \;x_2>0,\; e^{x_1}+e^{x_2}-e^{x_1+x_2}>0\}.
\]
Now interchange $x_1$ and $x_2$ to yield the double integral with the same value
\[
\frac{1}{(p-1)!(q-1)!}\int_{D_2}x_1^{q-1}x_2^{p-1}\left(e^{x_1}+e^{x_2}-e^{x_1+x_2}\right)^a e^{-rx_1}dx_1dx_2.
\]
With the same change of variables
\[
x_1=\log\frac{1-u_1}{1-u_2}, \; x_2=\log\frac{1}{u_2},
\]
we conclude that
\begin{equation}\label{eq2.3}
\begin{split}
I(p,q;a,r)&=\frac{1}{(p-1)!(q-1)!}\int_{E_2} \left(\frac{1-u_2}{1-u_1}\right)^r \left(\frac{u_1}{u_2}\right)^a\left(\log\frac{1-u_1}{1-u_2}\right)^{q-1}\\
&\hspace{32mm}\times \left(\log\frac{1}{u_2}\right)^{p-1} \frac{du_1du_2}{(1-u_1)u_2}\\
&=\int_{E_{p+q}}\left(\frac{1-u_{q+1}}{1-u_1}\right)^r\left(\frac{u_1}{u_{q+1}}\right)^a \left(\prod_{j=1}^q\frac{du_j}{1-u_j}\right) \left(\prod_{k=q+1}^{p+q}\frac{du_k}{u_k}\right).
\end{split}
\end{equation}

In light of the expression
\[
\frac{1}{(1-u_1)^{r+1}}=\sum_{\ell_1=1}^{\infty} \binom{\ell_1+r-1}{r}u_1^{\ell_1-1},
\]
the double integral in the right hand side of (\ref{eq2.3}) can be evaluated as
\begin{equation*}
\begin{split}
\frac{1}{r!}\sum_{T_{\bm{\ell}}(q)}\frac{\ell_1(\ell_1+1)\cdots (\ell_1+r-1)}{(\ell_1+a)(\ell_2+a)\cdots (\ell_q+a)}\sum_{j=0}^r (-1)^j \binom{r}{j} (\ell_q+j)^{-p}.
\end{split}
\end{equation*}
This leads to the identity
\begin{equation*}
\begin{split}
&\sum_{T_{\bm{k}}(p)}[(k_1+a)(k_2+a)\cdots (k_p+a)]^{-1}(k_p+r)^{-q}\\
=&\frac{1}{r!}\sum_{T_{\bm{\ell}}(q)}\frac{\ell_1(\ell_1+1)\cdots (\ell_1+r-1)}{(\ell_1+a)(\ell_2+a)\cdots (\ell_q+a)}\sum_{j=0}^r (-1)^j \binom{r}{j} (\ell_q+j)^{-p}.
\end{split}
\end{equation*}
Differentiating both sides of the above identity with respect to $a$ for $m$ times, we obtain our assertion.
\end{proof}

Taking $r=0$ into Theorem \ref{thm1} and then letting $a=0$, we get the special case of the duality and the formula (\ref{eq1.2}). With the similar consideration of multiple zeta values with parameters, Ohno's theorem (\ref{eq1.1}) can be obtained from the differentiations of the more general identity 
\begin{equation}\label{eq2.4}
\begin{split}
&\sum_{T_{\bm{k}}(|\bm{p}|)} \frac{1}{(k_1+a)(k_2+a)\cdots (k_{|\bm{p}|}+a)k_{p_1}^{q_1}k_{p_1+p_2}^{q_2}\cdots k_{|\bm{p}|}^{q_n}}\\
=&\sum_{T_{\bm{\ell}}(|\bm{q}|)}\frac{1}{(\ell_1+a)(\ell_2+a)\cdots (\ell_{|\bm{q}|}+a)\ell_{q_n}^{p_n}\ell_{q_{n-1}+q_n}^{p_{n-1}}\cdots \ell_{|\bm{q}|}^{p_1}} ,
\end{split}
\end{equation}
where the real number $a>-1$ and $\bm{p}=(p_1, p_2, \ldots, p_n)$, $\bm{q}=(q_1, q_2, \ldots, q_n)$ are $n$-tuples of positive integers. In fact, the left hand side of (\ref{eq2.4}) is equal to the vectorized form of (\ref{eq2.1}), \textit{i.e.} the integral obtained from the iterated integral of $\zeta(\{1\}^{p_1-1}, q_1+1, \ldots, \{1\}^{p_n-1}, q_n+1)$ with an additional factor (cf. \cite[page 76-82]{E13})
\begin{equation*}
\begin{split}
&\int_{E_{|\bm{p}|+|\bm{q}|}}\left(\frac{t_1t_{s_1+1}\cdots t_{s_{n-1}+1}}{t_{p_1+1}t_{s_1+p_2+1}\cdots t_{s_{n-1}+p_n+1}}\right)^a\prod_{j_1=1}^{p_1}\frac{dt_{j_1}}{1-t_{j_1}}\prod_{i_1=p_1+1}^{s_1}\frac{dt_{i_1}}{t_{i_1}}\\
&\times \prod_{j_2=s_1+1}^{s_1+p_2}\frac{dt_{j_2}}{1-t_{j_2}}\prod_{i_2=s_1+p_2+1}^{s_2}\frac{dt_{i_2}}{t_{i_2}}\;\cdots\prod_{j_n=s_{n-1}+1}^{s_{n-1}+p_n}\frac{dt_{j_n}}{1-t_{j_n}}\prod_{i_n=s_{n-1}+p_n+1}^{s_n}\frac{dt_{i_n}}{t_{i_n}},
\end{split}
\end{equation*}
where $s_m=p_1+q_1+\cdots+p_m+q_m$. So the identity (\ref{eq1.3}) can be viewed as a generalization of the special case of Ohno's theorem. 

\section{Duality among restricted sums}
For nonnegative integers $p, q, r$, the restricted sum formula \cite{ELO09} asserted that
\[
\sum_{|\bm{\alpha}|=q+r+1}\zeta(\{1\}^p, \alpha_1, \ldots, \alpha_{r}, \alpha_{r+1}+1)=\sum_{|\bm{\beta}|=p+r+1}\zeta(\beta_0, \beta_1, \ldots, \beta_p+q+1).
\]
There is another duality among restricted sums \cite{CEL16, EL16}
\begin{equation*}
\begin{split}
&\sum_{|\bm{\alpha}|=q+r+1}\zeta(\{1\}^p, \alpha_1, \ldots, \alpha_r, \alpha_{r+1}+1)\\
=&\sum_{|\bm{\beta}|=p+r+1}\zeta(\{1\}^q, \beta_1, \ldots, \beta_r, \beta_{r+1}+1).
\end{split}
\end{equation*}
By Proposition \ref{prop1.2}, the above relation is given by
\begin{equation*}
\begin{split}
&\frac{1}{p!q!r!}\int_{E_2}\left(\log\frac{1}{1-t_1}\right)^p \left(\log\frac{1-t_1}{1-t_2}\right)^r \left(\log\frac{t_2}{t_1}\right)^q \frac{dt_1dt_2}{(1-t_1)t_2}\\
=&\frac{1}{p!r!q!}\int_{E_2}\left(\log\frac{1-u_1}{1-u_2}\right)^p \left(\log\frac{u_2}{u_1}\right)^r \left(\log\frac{1}{u_2}\right)^q \frac{du_1du_2}{(1-u_1)u_2}\\
=&\frac{1}{q!r!p!}\int_{E_2}\left(\log\frac{1}{1-v_1}\right)^q \left(\log\frac{1-v_1}{1-v_2}\right)^r \left(\log\frac{v_2}{v_1}\right)^p \frac{dv_1dv_2}{(1-v_1)v_2}.
\end{split}
\end{equation*}
We now prove the following extension.
\begin{theorem}\label{thm3}
Suppose that $p,q,m$ and $r$ are nonnegative integers. Then the following series are equal.

\begin{equation}\label{eq3.1}
\begin{split}
\sum_{\substack{|\bm{\alpha}|=q+r+1 \\ T_{\bm{k}}(p+r+1)}}\frac{(k_1k_2\cdots k_p k_{p+r+1})^{-1}}{(k_{p+1}+m)^{\alpha_1}(k_{p+2}+m)^{\alpha_2}\cdots (k_{p+r+1}+m)^{\alpha_{r+1}}};
\end{split}
\end{equation}

\begin{equation}\label{eq3.2}
\begin{split}
\sum_{\substack{|\bm{\beta}|=p+r+1 \\ T_{\bm{\ell}}(p+1)}} \ell_1^{-\beta_1}\ell_2^{-\beta_2}\cdots \ell_{p+1}^{-\beta_{p+1}}(\ell_{p+1}+m)^{-q-1};
\end{split}
\end{equation}

\begin{equation}\label{eq3.3}
\begin{split}
&\sum_{j=0}^m (-1)^j \binom{m}{j} \sum_{\substack{|\bm{\beta}|=p+r+1 \\ T_{\bm{\ell}}(q+r+1)
}}(\ell_1\ell_2\cdots \ell_q)^{-1}(\ell_{q+1}+j)^{-\beta_1}(\ell_{q+2}+j)^{-\beta_2}\\
&\hspace{30mm}\times\cdots \times(\ell_{q+r}+j)^{-\beta_{q}} (\ell_{q+r+1}+j)^{-\beta_{q+1}-1}.
\end{split}
\end{equation}
\end{theorem}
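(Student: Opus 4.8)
The strategy follows the philosophy of Section 2 (and of the proof of Theorem \ref{thm1}): attach one extra factor, carrying the parameter $m$, to a double-integral representation, and let the substitutions already in hand do the work. Concretely, set
\[
K:=\frac{1}{p!\,q!\,r!}\int_{E_2}\left(\frac{t_1}{t_2}\right)^{m}\left(\log\frac{1}{1-t_1}\right)^{p}\left(\log\frac{1-t_1}{1-t_2}\right)^{r}\left(\log\frac{t_2}{t_1}\right)^{q}\frac{dt_1\,dt_2}{(1-t_1)t_2},
\]
the integrand of Proposition \ref{prop1.2} (with its parameter $\ell=0$) multiplied by $(t_1/t_2)^{m}$. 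First I would show $K$ equals the series \eqref{eq3.1}. Re-inflating $K$ to an iterated integral over $E_{p+q+r+2}$, exactly as in the derivation of Proposition \ref{prop1.2}, the variable $t_1$ becomes the head of the first block following the initial run of $\{1\}$'s and $t_2$ becomes the outermost variable, so the extra factor reads $(t_{p+1}/t_{p+q+r+2})^{m}$. Expanding each $(1-t_i)^{-1}$ as a geometric series in the usual way, $t_{p+1}^{m}$ advances every summation index from the $(p+1)$st block onward by $m$, while $t_{p+q+r+2}^{-m}$ cancels that advance at the single outermost $dt/t$. This produces exactly the asymmetric shape of \eqref{eq3.1}: the indices $k_{p+1},\dots,k_{p+r+1}$ meet the shift $+m$ against the exponents $\alpha_1,\dots,\alpha_{r+1}$, whereas the trailing factor is the \emph{unshifted} $k_{p+r+1}^{-1}$.

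Next I would carry $K$ through the two substitutions used to establish the triple equality of double integrals displayed just before the theorem. In the substitution taking the first of those integrals to the second, the degree-$q$ logarithm $\log(t_2/t_1)$ is matched with $\log(1/u_2)$, so $t_1/t_2=u_2$; in the one taking it to the third, $\log(t_2/t_1)$ is matched with $\log\!\bigl(1/(1-v_1)\bigr)$, so $t_1/t_2=1-v_1$. Hence the extra factor is invariant in shape, and
\[
K=\frac{1}{p!\,r!\,q!}\int_{E_2}u_2^{\,m}\left(\log\frac{1-u_1}{1-u_2}\right)^{p}\left(\log\frac{u_2}{u_1}\right)^{r}\left(\log\frac{1}{u_2}\right)^{q}\frac{du_1\,du_2}{(1-u_1)u_2}
\]
\[
=\frac{1}{q!\,r!\,p!}\int_{E_2}(1-v_1)^{m}\left(\log\frac{1}{1-v_1}\right)^{q}\left(\log\frac{1-v_1}{1-v_2}\right)^{r}\left(\log\frac{v_2}{v_1}\right)^{p}\frac{dv_1\,dv_2}{(1-v_1)v_2}.
\]

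It remains to expand these two integrals. In the $u$-integral, $u_2$ turns out to be the first of the $q+1$ outermost $dt/t$-factors belonging to the last block, so $u_2^{\,m}$ shifts precisely those $q+1$ positions and produces the combination $\ell_{p+1}^{-\beta_{p+1}}(\ell_{p+1}+m)^{-q-1}$; the expansion is then \eqref{eq3.2}. In the $v$-integral, write $(1-v_1)^{m}=\sum_{j=0}^{m}(-1)^{j}\binom{m}{j}v_1^{\,j}$; since $v_1$ is the head of the first block after the run of $\{1\}$'s, each $v_1^{\,j}$ shifts that block and all later ones uniformly by $j$, and summing over $j$ delivers \eqref{eq3.3}. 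This closes the chain \eqref{eq3.1} $=K=$ \eqref{eq3.2} $=$ \eqref{eq3.3}.

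The genuinely new content is small — essentially just spotting that $(t_1/t_2)^{m}$ is the factor to insert and that it transforms to $u_2^{m}$ and then to $(1-v_1)^{m}$ under the substitutions already established for the $m=0$ case. The main labor, and the only place care is needed, is the bookkeeping in the three series expansions: identifying which variable of the inflated iterated integral plays the role of $t_1$, $u_2$, $v_1$ and $t_{p+q+r+2}$, and checking that the pair $t_{p+1}^{m}$, $t_{p+q+r+2}^{-m}$ leaves exactly one index unshifted in \eqref{eq3.1}. These are the same manipulations already performed in Section 2, so I do not expect an obstacle beyond careful index-tracking.
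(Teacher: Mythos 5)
Your proposal is correct and follows essentially the same route as the paper: the paper also represents \eqref{eq3.1} by the double integral with the inserted factor $(t_1/t_2)^m$, passes through the change of variables $x_1=\log\frac{1}{1-t_1}$, $x_2=\log\frac{t_2}{t_1}$ (so the factor becomes $e^{-mx_2}$), and then, via $x_1=\log\frac{1-u_1}{1-u_2}$, $x_2=\log\frac{1}{u_2}$ and the interchange $x_1\leftrightarrow x_2$ followed by $x_1=\log\frac{1}{1-v_1}$, $x_2=\log\frac{v_2}{v_1}$, obtains exactly your $u_2^{m}$ and $(1-v_1)^{m}$ integrals, which are expanded into \eqref{eq3.2} and (after the binomial expansion of $(1-v_1)^m$) into \eqref{eq3.3}. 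Your identification $t_1/t_2=u_2$ and $t_1/t_2=1-v_1$ is precisely the composite of the paper's substitutions, and your series bookkeeping matches the paper's.
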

\begin{proof}
We express (\ref{eq3.1}) as the double integral
\[
\frac{1}{p!q!r!}\int_{E_2}\left(\frac{t_1}{t_2}\right)^m\left(\log\frac{1}{1-t_1}\right)^p \left(\log\frac{1-t_1}{1-t_2}\right)^{r} \left(\log\frac{t_2}{t_1}\right)^q \frac{dt_1dt_2}{(1-t_1)t_2}.
\]
Under the change of variables:
\[
x_1=\log\frac{1}{1-t_1},\hspace{5mm} x_2=\log\frac{t_2}{t_1},
\]
the double integral is transformed into
\begin{equation}\label{int1}
\frac{1}{p!q!r!}\int_{D_2}x_1^p x_2^q \left(\log\frac{1}{e^{x_1}+e^{x_2}-e^{x_1+x_2}}\right)^{r} e^{-mx_2}dx_1dx_2
\end{equation}
with $D_2=\{(x_1, x_2) \;|\; x_1>0, \;x_2>0,\; e^{x_1}+e^{x_2}-e^{x_1+x_2}>0\}$. Now with another change of variables:
\[
x_1=\log\frac{1-u_1}{1-u_2},\hspace{5mm} x_2=\log\frac{1}{u_2},
\]
the double integral is transformed back to
\[
\frac{1}{p!q!r!}\int_{E_2}u_2^m\left(\log\frac{1-u_1}{1-u_2}\right)^p \left(\log\frac{u_2}{u_1}\right)^{r} \left(\log\frac{1}{u_2}\right)^q \frac{du_1du_2}{(1-u_1)u_2}.
\]
In terms of multiple zeta values, it is exactly the integral representation of (\ref{eq3.2}). This proves the equality of $(\ref{eq3.1})$ and $(\ref{eq3.2})$.

On the other hand, we interchange $x_1$ and $x_2$ in (\ref{int1}) to yield the double integral of the same value
\[
\frac{1}{p!q!r!}\int_{D_2}x_1^p x_2^q \left(\log\frac{1}{e^{x_1}+e^{x_2}-e^{x_1+x_2}}\right)^r e^{-mx_2}dx_1dx_2.
\]
This corresponds to the following double integral 
\[
\frac{1}{q!r!p!}\int_{E_2}(1-v_1)^m\left(\log\frac{1}{1-v_1}\right)^q \left(\log\frac{1-v_1}{1-v_2}\right)^r \left(\log\frac{v_2}{v_1}\right)^p \frac{dv_1dv_2}{(1-v_1)v_2}
\]
after making the change of variables:
\[
x_1=\log\frac{1}{1-v_1}, x_2=\log\frac{v_2}{v_1}.
\]
In terms of multiple zeta values, it is equal to (\ref{eq3.3}).
\end{proof}
\begin{remark}
In summary, in the proof of previous theorem, we obtain that
\begin{equation}
\begin{split}
&\frac{1}{p!q!r!}\int_{E_2}\left(\frac{t_1}{t_2}\right)^m\left(\log\frac{1}{1-t_1}\right)^p \left(\log\frac{1-t_1}{1-t_2}\right)^r \left(\log\frac{t_2}{t_1}\right)^q \frac{dt_1dt_2}{(1-t_1)t_2}\\
=&\frac{1}{p!r!q!}\int_{E_2}u_2^m\left(\log\frac{1-u_1}{1-u_2}\right)^p \left(\log\frac{u_2}{u_1}\right)^r \left(\log\frac{1}{u_2}\right)^q \frac{du_1du_2}{(1-u_1)u_2}\\
=&\frac{1}{q!r!p!}\int_{E_2}(1-v_1)^m\left(\log\frac{1}{1-v_1}\right)^q \left(\log\frac{1-v_1}{1-v_2}\right)^r \left(\log\frac{v_2}{v_1}\right)^p \frac{dv_1dv_2}{(1-v_1)v_2}.
\end{split}
\end{equation}

The parameter $m$ here can be a real number so that we can perform differentiations with respect to $m$ to produce more general identities concerning duality.
\end{remark}

\section{A truncated sum}
The special truncated sum (see Corollary \ref{cor1.5})
\[
\sum_{\substack{|\bm{\alpha}|=m+p \\ T_{\bm{k}}(p)}}(k_1+1)^{-\alpha_1}\cdots (k_{p-1}+1)^{-\alpha_{p-1}}(k_p+1)^{-\alpha_p-1}
\]
has the value
\[
\sum_{\ell=1}^{\infty} \frac{\ell}{(\ell+1)^{m+1}}\left[\frac{1}{\ell^p}-\frac{1}{(\ell+1)^p}\right]
\]
or
\[
\sum_{\ell=1}^{\infty}\frac{1}{\ell^{p-1}(\ell+1)^{m+1}}-\zeta(m+p)+\zeta(m+p+1).
\]
This implies, in particular, the partial sum corresponding to $k_1=1$
\[
S=\sum_{\substack{|\bm{\alpha}|=m+p \\ 1<k_2<\cdots<k_p}}1^{-\alpha_1}k_2^{-\alpha_2}\cdots k_{p-1}^{-\alpha_{p-1}}k_p^{-\alpha_p-1}
\]
has the value
\[
\zeta(m+p)-\sum_{\ell=1}^{\infty}\frac{1}{\ell^{p-1}(\ell+1)^{m+1}}
\]
when $m+p\geq 2$. Of course, the value is a linear combination of single zeta values along with constants.

For $1\leq j<p$, we let
\[
S_j=\sum_{\substack{|\bm{\alpha}|=m+p \\ 1\leq k_{j+1}<\cdots<k_p}}1^{-\alpha_1-\alpha_2-\cdots-\alpha_j}k_{j+1}^{-\alpha_{j+1}}\cdots k_{p-1}^{-\alpha_{p-1}}k_{p}^{-\alpha_p-1}
\]
and $S_p=\binom{m+p-1}{m}$. Then $S=S_1-S_2+S_3-\cdots +(-1)^{p-1}S_p$.
\begin{theorem}
For a pair of positive integers $m$ and $p$, then we have
\[
S_1-S_2+S_3-\cdots+(-1)^{p-1}S_p=\zeta(m+p)-\sum_{\ell=1}^{\infty}\frac{1}{\ell^{p-1}(\ell+1)^{m+1}}.
\]
\end{theorem}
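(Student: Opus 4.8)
The plan is to combine two facts about the partial sum $S$ introduced just above: a closed-form evaluation of $S$, and an inclusion--exclusion identity identifying $S$ with the alternating sum $S_1-S_2+\cdots+(-1)^{p-1}S_p$. The theorem then simply equates the two expressions for $S$.

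\emph{Step 1: evaluating $S$.} Consider the full sum
\[
\sum_{\substack{|\bm{\alpha}|=m+p\\ T_{\bm{k}}(p)}}k_1^{-\alpha_1}\cdots k_{p-1}^{-\alpha_{p-1}}k_p^{-\alpha_p-1},
\]
which is $\sum_{|\bm{\alpha}|=m+p}\zeta(\alpha_1,\dots,\alpha_{p-1},\alpha_p+1)$, hence equals $\zeta(m+p+1)$ by the sum formula (Proposition~\ref{prop1.3}; legitimate since $m\ge 1$). I would split this sum according to $k_1=1$ versus $k_1\ge 2$. The $k_1=1$ part is exactly $S$, while the $k_1\ge 2$ part becomes, after the shift $k_i\mapsto k_i-1$, the truncated sum of Corollary~\ref{cor1.5} with $r=1$,
\[
\sum_{\ell=1}^{\infty}\frac{\ell}{(\ell+1)^{m+1}}\Bigl(\frac{1}{\ell^{p}}-\frac{1}{(\ell+1)^{p}}\Bigr).
\]
Expanding the bracket splits this last quantity into $\sum_{\ell\ge 1}\ell^{-(p-1)}(\ell+1)^{-(m+1)}$ and $\sum_{\ell\ge 1}\ell(\ell+1)^{-(m+p+1)}$, and since $\ell(\ell+1)^{-(m+p+1)}=(\ell+1)^{-(m+p)}-(\ell+1)^{-(m+p+1)}$ the second sum telescopes to $\zeta(m+p)-\zeta(m+p+1)$. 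Subtracting the $k_1\ge 2$ part from $\zeta(m+p+1)$ then yields
\[
S=\zeta(m+p)-\sum_{\ell=1}^{\infty}\frac{1}{\ell^{p-1}(\ell+1)^{m+1}},
\]
the right-hand side of the theorem.

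\emph{Step 2: the inclusion--exclusion identity.} For $0\le j\le p$, introduce the auxiliary sum $V_j$ defined exactly like $S_j$ but with the condition $1\le k_{j+1}$ replaced by $2\le k_{j+1}$; thus $V_1=S$ (compare the two definitions) and $V_p=S_p=\binom{m+p-1}{m}$ (there is no variable $k_{j+1}$ when $j=p$). For $1\le j\le p-1$, I would decompose $S_j$ according to whether $k_{j+1}=1$ or $k_{j+1}\ge 2$: the alternative $k_{j+1}\ge 2$ gives $V_j$, while in the alternative $k_{j+1}=1$ the factor $k_{j+1}^{-\alpha_{j+1}}$ becomes $1$ (so $\alpha_{j+1}$ merges harmlessly into the $1^{-(\cdots)}$ prefactor) and the constraint collapses to $2\le k_{j+2}<\cdots<k_p$, which is exactly $V_{j+1}$ (read as the bare count $\binom{m+p-1}{m}=S_p$ when $j+1=p$). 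Hence $S_j=V_j+V_{j+1}$ for $1\le j\le p-1$. Unwinding this recursion, $V_1=S_1-V_2=S_1-S_2+V_3=\cdots$, and using $V_p=S_p$ at the last step gives $S=V_1=S_1-S_2+\cdots+(-1)^{p-1}S_p$.

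Steps 1 and 2 together prove the theorem. I do not anticipate a real obstacle here: the delicate points are purely the bookkeeping of Step 2 --- verifying that the block $k_{j+1}=1$ collapses to $V_{j+1}$ with the correct index and that the recursion terminates in the binomial term $S_p$ --- together with the routine check that every series in sight converges, which holds because the last summation variable always carries exponent $\alpha_p+1\ge 2$ and $m\ge 1$ (so that even $\sum_{\ell\ge1}\ell^{-(p-1)}(\ell+1)^{-(m+1)}$ converges when $p=1$).
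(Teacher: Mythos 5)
Your proposal is correct and follows essentially the same route as the paper: the value of $S$ is obtained from Corollary \ref{cor1.5} with $r=1$ together with the sum formula (Proposition \ref{prop1.3}), and the theorem is then the alternating-sum identity $S=S_1-S_2+\cdots+(-1)^{p-1}S_p$, which the paper asserts by inclusion--exclusion and you verify explicitly via the recursion $S_j=V_j+V_{j+1}$. Your write-up simply makes explicit the two steps the paper leaves implicit, and both steps check out.
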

\section{A final remark}
When multiple zeta values are expressed as iterated integrals, most of duality theorems of multiple zeta values are obtained from the change of variables: $\bm{t}\rightarrow 1-\bm{t}=\bm{u}$ or componentwise
\[
u_1=1-t_{|\bm{\alpha}|}, u_2=1-t_{|\bm{\alpha}|-1}, \ldots, u_{|\bm{\alpha}|}=1-t_1.
\]
This also includes multiple zeta values or sums of multiple zeta values expressed as double integrals. For example, we have
\begin{equation*}
\begin{split}
&\frac{1}{m!n!}\int_{E_2}\left(\log\frac{1-t_1}{1-t_2}\right)^m \left(\log\frac{1}{t_2}\right)^n \frac{dt_1dt_2}{(1-t_1)t_2}\\
=&\frac{1}{m!n!}\int_{E_2}\left(\log\frac{1}{1-u_1}\right)^n \left(\log\frac{u_2}{u_1}\right)^m \frac{du_1du_2}{(1-u_1)u_2}.
\end{split}
\end{equation*}
On the other hand, when multiple zeta values or sums of multiple zeta values are expressed in double integrals, the change of variables
\[
x_1=\log\frac{1}{1-t_1}, x_2=\log\frac{t_2}{t_1}
\]
or
\[
x_1=\log\frac{1-t_1}{1-t_2}, x_2=\log\frac{1}{t_2}
\]
provides an alternative way to obtain the duals. Especially, when it is hard to evaluate the duals simply obtained from the change of variables $\bm{t}\rightarrow 1-\bm{t}$.

\subsection*{Acknowledgment}
The authors thank the anonymous referee for careful reading the manuscript and for very helpful comments and suggestions.

\providecommand{\bysame}{\leavevmode \hbox to3em%
{\hrulefill}\thinspace}

\end{document}